\DeclareFontFamily{U}{mathx}{\hyphenchar\font45}
\DeclareFontShape{U}{mathx}{m}{n}{
      <5> <6> <7> <8> <9> <10>
      <10.95> <12> <14.4> <17.28> <20.74> <24.88>
      mathx10
      }{}
\DeclareSymbolFont{mathx}{U}{mathx}{m}{n}
\DeclareMathAccent{\widecheck}{0}{mathx}{"71}
\DeclareSymbolFont{cyrletters}{OT2}{wncyr}{m}{n}
\DeclareMathSymbol{\Sha}{\mathalpha}{cyrletters}{"58}
\DeclareMathOperator{\tbT} {\bf{T}}
\DeclareMathOperator{\tbL} {\bf{L}}
\DeclareMathOperator{\Aut} {Aut}
\DeclareMathOperator{\End} {End}
\DeclareMathOperator{\Gal} {Gal}
\DeclareMathOperator{\Disc} {Disc}
\DeclareMathOperator{\Spec} {Spec}
\DeclareMathOperator{\reg} {reg}
\DeclareMathOperator{\topo} {top}
\DeclareMathOperator{\et} {\text{\'{e}t}}
\DeclareMathOperator{\CC} {\mathbb{C}}
\DeclareMathOperator{\RR} {\mathbb{R}}
\DeclareMathOperator{\QQ} {\mathbb{Q}}
\DeclareMathOperator{\ZZ} {\mathbb{Z}}
\newtheorem{lemma}{Lemma}[section]
\newtheorem{thm}[lemma]{Theorem}
\theoremstyle{definition}
\theoremstyle{definition}
\theoremstyle{definition}
\theoremstyle{definition}
\newcommand{\mb}[1]{\mathbb{#1}}
\newcommand{\mc}[1]{\mathcal{#1}}
\newcommand{\ra}{\rightarrow}
\newcommand{\ol}[1]{\overline{#1}}
\newcommand{\wt}[1]{\widetilde{#1}}
\newcommand{\tb}[1]{\textbf{#1}}
\newcommand{\bs}{\backslash}
\begin{document}
\title{Arithmetic sparsity in mixed Hodge settings}
\author{Kenneth Chung Tak Chiu}
\address{Department of Mathematics,  University of Toronto, 40 St. George Street, Ontario, Canada.}
\email{kennethct.chiu@alumni.utoronto.ca}
\begin{abstract}
Let  $X$ be a smooth irreducible quasi-projective algebraic variety over a number field $K$. 
Suppose $X$ is equipped with a $p$-adic \'{e}tale local system compatible with an admissible graded-polarized variation of mixed Hodge structures on the complex analytification of $X_{\CC}$.
We prove that the $S$-integral points in $X$ are covered by subpolynomially many geometrically irreducible $K$-subvarieties, each lying in a fiber of the mixed period mapping arising from the variation of mixed Hodge structures. This is based on recent works by Brunebarbe-Maculan and Ellenberg-Lawrence-Venkatesh. As an application, we prove that there are subpolynomially many $S$-integral Laurent polynomials with fixed reflexive Newton polyhedron $\Delta$ and fixed non-zero principal $\Delta$-determinant.  Our results answer a question asked by Ellenberg-Lawrence-Venkatesh.
\end{abstract}
\subjclass[2020]{11G35}

\maketitle

\section{Introduction}\label{introduction}
Faltings theorem \cite{Fal83} states that curves of genus $\geq 2$ over any number field $K$ have only finitely many rational points. In higher dimensions, Bombieri-Lang conjecture states that rational points of a smooth projective variety of general type defined over $K$  are not Zariski-dense, in other words, finitely many irreducible algebraic $K$-subvarieties properly contained in the variety are enough to cover these rational points.
One could also look at statements for certain quasi-projective varieties, e.g. moduli spaces of varieties with fixed set of properties, although rational points are replaced by $S$-integral points, where $S$ is a finite set of places of $K$.
There were affirmative answers to these questions in the cases for moduli spaces of abelian varieties of fixed dimension \cite{Fal83}, curves of fixed genus \cite{Fal83}, hypersurfaces of fixed large degree \cite{LV20}, and smooth hypersurfaces representing an ample class in the Neron-Severi group of an abelian variety \cite{LS20}.
We aim at proving statements where non-density is weakened to sparsity, i.e.  subpolynomial growth rate, in terms of the heights of the $S$-integral points in a dense open subset.

\subsection{Arithmetic sparsity in mixed Hodge settings}\label{subsection:Arithmetic sparsity in mixed Hodge settings}
Let $K$ be a finite extension of $\QQ$ with an embedding into $\CC$. 
Let  $X$ be a smooth irreducible quasi-projective algebraic variety over $K$ with an embedding into the projective space $\mb{P}^m_K$ for some positive integer $m$. 
Let $S$ be a finite set of places of $K$ such that $X$ has a smooth integral model over the ring $\mc{O}_{K,S}$ of $S$-integers. 
Our goal is to prove the following main theorem, which is a mixed Hodge analogue of \cite[Theorem 1.2]{ELV23}:

\begin{thm}\label{Sparsity, surjective quasi-projective}
Let $\pi:V\ra X$ be a surjective quasi-projective morphism over $K$ from an irreducible algebraic variety $V$. There exists a non-empty Zariski open subset $X^*$  of $X$ such that for any $i$, the higher direct image with compact support $(R^i(\pi|_{V_{\CC}})_! \QQ_{V_{\CC}})|_{X^*_{\CC}}$ underlies an admissible graded-polarized variation of $\QQ$-mixed Hodge structures, and such that for any $\varepsilon>0$, the $S$-integral points of $X^*$ with height at most $B$ are covered by $O_{\varepsilon}(B^{\varepsilon})$ geometrically irreducible $K$-subvarieties, each lying in a single fiber of the mixed period mapping  $\Phi$ arising from the variation.
\end{thm}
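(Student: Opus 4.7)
The plan is to combine the strategy of Ellenberg-Lawrence-Venkatesh with the formalism of admissible graded-polarized variations of mixed Hodge structures, trading the non-smoothness and non-properness of $\pi$ for a smooth projective family with relative simple normal crossings (SNC) boundary, which is the setting in which log $p$-adic Hodge theory applies.

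First, I would construct $X^*$ as follows. Using Hironaka's resolution of singularities, choose a smooth projective $K$-variety $\ol V$ containing a smooth quasi-projective $\wt V$ as the complement of a simple normal crossings divisor, together with a proper birational morphism $\wt V\ra V$. Let $\ol\pi:\ol V\ra X$ denote the induced morphism. By generic smoothness applied to $\ol\pi$ and to the strata of the boundary divisor, together with proper base change for $R\pi_!$, there is a non-empty Zariski open $X^*\subset X$ such that $\ol V|_{X^*}\ra X^*$ is smooth projective, the boundary is a relative SNC divisor over $X^*$, and $(R^i(\pi|_{V_\CC})_!\QQ_{V_\CC})|_{X^*_\CC}$ is computed from the cohomology of this smooth projective family relative to its boundary. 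By the theorems of Deligne, Steenbrink-Zucker, and M. Saito, the resulting complex local system on $X^*_\CC$ underlies a graded-polarized variation of $\QQ$-mixed Hodge structures, and admissibility at the boundary $X\setminus X^*$ follows from Kashiwara's theorem and Saito's theory of mixed Hodge modules. This yields a complex mixed period mapping $\Phi$.

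For the sparsity statement, I would adapt the arithmetic strategy of \cite{ELV21} to the mixed setting using log-crystalline cohomology. Choose a prime $\mf p$ of $\mc O_K$ of sufficiently large residue characteristic such that the smooth projective family together with its SNC boundary extends smoothly over the localization of $\mc O_K$ at $S\cup\{\mf p\}$, and such that a Fontaine-Jannsen / log Fontaine-Laffaille comparison is available. Distribute the $S$-integral points of $X^*$ of height at most $B$ among $O_\varepsilon(B^\varepsilon)$ residue disks of a $p$-adic analytic cover, following the density bound used in \cite{BM22}. On each residue disk, the log crystalline-to-\'etale comparison provides a rigid-analytic $p$-adic period map, and two $S$-integral points with the same image under the complex period mapping $\Phi$ must then satisfy an algebraic relation coming from parallel transport of log crystalline Frobenius; this places them in a common proper $K$-subvariety contained in a fiber of $\Phi$. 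A Bezout-type bound on the number of such subvarieties per disk, summed over disks, produces the desired $O_\varepsilon(B^\varepsilon)$ estimate.

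The hardest part will be the $p$-adic step: one must set up a log-crystalline comparison that is compatible both with the admissible VMHS on $X^*$ and with the integral structure on $X^*(\mc O_{K,S})$, and then use it to extract an algebraic relation from the mere coincidence of complex periods. This is precisely where the passage from the pure smooth proper case of \cite{ELV21} to the mixed setting is most delicate, and it is the reason the proof must factor through a smooth projective compactification with relative SNC boundary rather than working directly with $\pi$.
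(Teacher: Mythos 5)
Your geometric step is roughly in the right spirit but incomplete, and your arithmetic step is a fundamentally different (and, as sketched, inapplicable) strategy from what the paper uses.

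\textbf{Geometric step.} You replace $V$ by a proper birational modification $\wt V$ with an SNC compactification $\ol V$. But $\wt V\ra V$ being proper birational does \emph{not} give an isomorphism $R\pi_!\QQ_{V_\CC}\simeq R(\wt\pi)_!\QQ_{\wt V_\CC}$ when $V$ is singular; the statement of the theorem only assumes $V$ irreducible, not smooth. The paper avoids this by never resolving $V$ itself. It first compactifies $\pi$ as $V\hookrightarrow\ol V\xrightarrow{f}X$, sets $D=\ol V\setminus V$, and uses the excision triangle $0\ra(\iota_\CC)_!\QQ_{V_\CC}\ra\QQ_{\ol V_\CC}\ra(\varphi_\CC)_*\QQ_{D_\CC}\ra0$ to identify $R(\pi|_{V_\CC})_!\QQ_{V_\CC}$ with the shifted cone of the restriction map on direct images. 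It then replaces $\ol V$ and $D$ by smooth semi-simplicial hypercoverings $\ol V_\bullet$ and $D_\bullet$ (Deligne's cohomological descent), applies generic smoothness simultaneously to all faces to choose $X^*$, and invokes Fujino--Fujisawa to get the admissible graded-polarized VMHS on $H^i$ of the cone. The semi-simplicial machinery is essential to handle singular $V$ and singular $\ol V$, $D$ without changing the sheaf being computed; your sketch silently loses this.

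\textbf{Arithmetic step.} Here the proposal goes in the wrong direction. You outline a Lawrence--Venkatesh-style argument: residue disks, log-crystalline/$p$-adic period maps, a comparison isomorphism, and extracting algebraic relations from coincidence of $p$-adic periods. That is the method of Lawrence--Venkatesh \cite{LV20}, which yields non-Zariski-density, not the $O_\varepsilon(B^\varepsilon)$ counting in the statement, and you attribute it to \cite{ELV21} and \cite{BM22}, but neither of those papers uses $p$-adic cohomology at all. The paper instead reduces Theorem \ref{Sparsity, surjective quasi-projective} to Theorem \ref{Sparsity, VMHS} (semi-simplicial setting) and then to Theorem \ref{Sparsity, abstract VMHS}, whose proof is the determinant method: (i) a rigidity lemma showing that any subvariety not lying in a fiber of $\Phi$ has infinite topological monodromy on $\mc L_{an}$; (ii) the Brunebarbe--Maculan lemma on small-degree normal cycles, used together with mod $p^n$ \'etale monodromy to build, for any $D$, a finite \'etale Galois cover of $X$ under which the preimage of every ``bad'' subvariety has degree $\geq D$ (this is where the prime $p$ appears --- as a modulus for monodromy, not as a crystalline prime); (iii) Broberg's theorem (after Bombieri--Pila and Heath-Brown) to cover bounded-height rational points on those pulled-back high-degree subvarieties by $O_\varepsilon(B^\varepsilon)$ divisors, followed by downward induction on dimension. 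None of the log-crystalline or Fontaine--Laffaille apparatus you invoke is present, and the ``Bezout-type bound on the number of such subvarieties per disk'' you gesture at has no analogue that would give $B^\varepsilon$. To repair the proposal you would need to replace the entire $p$-adic paragraph by the cover-construction plus determinant-method argument, and to pass through the abstract VMHS theorem via the cone/hypercovering comparison rather than through a birational resolution of $V$.
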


The statement with $K$ replaced by $\CC$ and without the latter condition about integral points is proved by Brosnan-El Zein  \cite[Cor. 8.1.22]{BE} and Fujino-Fujisawa \cite[Theorem 4.13]{FF}.
Due to Theorem \ref{Sparsity, surjective quasi-projective}, to count points in $X$, we only have to count points in each fiber of a period mapping. This will become useful when the period mapping is not a constant map, i.e. the variation of mixed Hodge structure is not trivial, say in situations where the infinitesimal Torelli theorems hold.

Theorem \ref{Sparsity, surjective quasi-projective} will be proved using the following analogous theorem for variations of mixed Hodge structures with compatible $p$-adic \'{e}tale local system:

\begin{thm}\label{Sparsity, abstract VMHS}
Let $\mc{L}_{an}$ be a local system of $\ZZ$-modules on the analytification $X^{an}_{\CC}$.  Suppose $\mc{L}_{an}$ underlies an admissible graded-polarized variation of $\QQ$-mixed Hodge structures (VMHS). Let $p$ be a prime for which $\mc{L}_{an}$ is $p$-torsion-free. Suppose for each positive integer $n$, there exists on $X_K$ an \'{e}tale local system $\mc{L}_{n, \et}$ of $\ZZ/p^n\ZZ$-modules such that $(\mc{L}_{n,\et,\CC})_{an}\simeq \mc{L}_{an}\otimes (\ZZ/p^n\ZZ)$, where $\mc{L}_{n,\et,\CC}$ is the pullback of $\mc{L}_{n,\et}$ to $X_{\CC}$. Then for any $\varepsilon>0$, the $S$-integral points of $X$ with height at most $B$ are covered by $O_{\varepsilon}(B^{\varepsilon})$ geometrically irreducible $K$-subvarieties, each lying in a single fiber of the mixed period mapping $\Phi$ arising from the variation.
\end{thm}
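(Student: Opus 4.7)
The plan is to adapt the strategy of Ellenberg-Lawrence-Venkatesh \cite{ELV21} to the mixed setting, with the axiomatic étale-Betti compatibility $(\mc{L}_{n,\et,\CC})_{an}\simeq \mc{L}_{an}\otimes (\ZZ/p^n\ZZ)$ playing the role that relative étale cohomology of a smooth proper family plays in the pure geometric case. After possibly enlarging $S$, I would fix an auxiliary prime $\mathfrak{p}\notin S$ of $K$ above $p$ at which $X$ has good reduction, and partition $X(\mc{O}_{K,S})$ into residue disks: for each point $x_0$, let $\Omega(x_0)\subset X(K_{\mathfrak{p}})$ be the set of $K_{\mathfrak{p}}$-points agreeing with $x_0$ modulo $\mathfrak{p}^n$, where $n$ is a parameter to be balanced against $B$.

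On each $p$-adic polydisk $\Omega(x_0)$ the restriction $\mc{L}_{n,\et}|_{\Omega(x_0)}$ is canonically trivial; together with the compatibility hypothesis and an inverse limit in $n$, this produces a $p$-adic trivialization of the de Rham realization $\mc{L}_{dR}:=(\mc{L}_{an}\otimes_{\ZZ}\mc{O}_{X^{an}_{\CC}})^{\nabla}$, extended over $X_{K_{\mathfrak{p}}}$ via admissibility of the VMHS. Relative to this trivialization the Hodge filtration $F^{\bullet}\mc{L}_{dR}$ defines a $p$-adic analytic mapping
\[
\Phi_{\mathfrak{p}}:\Omega(x_0)\longrightarrow \check{D}_{K_{\mathfrak{p}}}
\]
into a flag variety parametrizing admissible filtrations of the given Hodge-numerical type compatible with the weight filtration. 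The central claim is that $S$-integral points $x_1,x_2\in\Omega(x_0)$ with $\Phi_{\mathfrak{p}}(x_1)=\Phi_{\mathfrak{p}}(x_2)$ must lie in the same fiber of the complex mixed period mapping $\Phi$, because both $\Phi$ and $\Phi_{\mathfrak{p}}$ read off the Hodge filtration on the same de Rham bundle via compatible trivializations (Betti, respectively étale) that the axiomatic compatibility identifies.

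To conclude I would combine three bounds: the number of residue disks meeting $X(\mc{O}_{K,S})$ of height $\leq B$ is at most $|\kappa(\mathfrak{p})|^{n\dim X}$; for each disk the preimage of a single point in $\check{D}_{K_{\mathfrak{p}}}$ under $\Phi_{\mathfrak{p}}$ is cut out by vanishing of $p$-adic analytic functions whose Zariski closure in $X$ is algebraic over $K$ and lies in a single fiber of $\Phi$; and the number of distinct images of such $S$-integral points under $\Phi_{\mathfrak{p}}$ inside a given disk admits an effective interpolation bound of order $B^{O(1/n)}$. Balancing $n\sim C_{\varepsilon}\log B$ then yields the $O_{\varepsilon}(B^{\varepsilon})$ count.

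The main obstacle is the same-fiber implication in the mixed case, because a fiber of $\Phi$ is parametrized not merely by a pure graded-polarized Hodge filtration but by an extension datum valued in the unipotent radical of the Mumford-Tate group of $\mc{L}_{an}$. Here I expect admissibility, which controls the relative monodromy filtration and the associated extension classes uniformly across $n$, to be the key ingredient, reducing to the pure case on each graded piece of the weight filtration and then propagating upward by an explicit analysis of the unipotent extension data. A secondary difficulty is a uniform effective interpolation bound on $\Omega(x_0)$, which I would handle via a Bezout-type estimate on the $p$-adic polydisk of the kind developed in \cite{ELV21}.
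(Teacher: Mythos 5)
There is a genuine gap, and it lies at the heart of your plan: the $p$-adic period mapping $\Phi_{\mathfrak{p}}$ you want to build does not exist under the hypotheses of this theorem. The data you are given is purely of two kinds: a complex-analytic admissible graded-polarized VMHS underlain by $\mc{L}_{an}$ on $X^{an}_{\CC}$, and, for each $n$, a finite-coefficient \'{e}tale local system $\mc{L}_{n,\et}$ on $X_K$ whose base change to $\CC$ analytifies to $\mc{L}_{an}\otimes(\ZZ/p^n\ZZ)$. There is no de Rham realization defined over $K$ (let alone over $K_{\mathfrak{p}}$), no filtered vector bundle with connection on $X_{K_{\mathfrak{p}}}$, and no crystalline/de Rham comparison linking the \'{e}tale data on residue disks to a Hodge filtration; the bundle $(\mc{L}_{an}\otimes\mc{O}_{X^{an}_{\CC}})$ is a complex-analytic object with no specified $K$-structure, and admissibility of the VMHS is a statement about degenerations (relative monodromy weight filtrations), not a descent or integrality statement. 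In Lawrence--Venkatesh the analogous map exists only because the local system is geometric, so that relative de Rham cohomology with its Gauss--Manin connection and Hodge filtration is an algebraic object over $K$ and $p$-adic Hodge theory ties it to the \'{e}tale side; none of that is available for an abstract VMHS. (A further, smaller, problem: the hypothesis does not assert any compatibility between $\mc{L}_{n,\et}$ for different $n$, so even the inverse limit you invoke to get a $\ZZ_p$-object is unjustified; the theorem is stated so that a single sufficiently large $n$ suffices.) Your ``same-fiber'' claim and the algebraicity of fibers of $\Phi_{\mathfrak{p}}$ inside a disk both presuppose exactly this missing arithmetic-to-Hodge comparison, so the argument cannot be completed as proposed.

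The paper's proof uses the \'{e}tale hypothesis in a different and weaker way, for which finite coefficients on $X_K$ are enough: it is the Ellenberg--Lawrence--Venkatesh/Brunebarbe--Maculan route rather than the Lawrence--Venkatesh route. First, rigidity for admissible graded-polarized VMHS shows that any subvariety not contained in a fiber of $\Phi$ has infinite image of topological monodromy; by the Brunebarbe--Maculan finiteness statement for subgroups of $\pi_1^{\et}(X_{\CC})$ arising from normal cycles of bounded degree, one can choose $n$ so large that the mod $p^n$ monodromy of all such subvarieties is uniformly large. The \'{e}tale local system $\mc{L}_{n,\et}$ then furnishes a finite \'{e}tale Galois cover of $X$ (extending over the integral model after enlarging $S$) such that every dominant component of the preimage of a non-fiber subvariety has degree at least $D$. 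Integral points are lifted to rational points of bounded height on finitely many twists (Chevalley--Weil plus Hermite--Minkowski), and the count is done with Broberg's theorem on covering rational points of bounded height by few divisors, iterating down in dimension. If you want to salvage your approach you would have to add substantial hypotheses (geometric origin of $\mc{L}_{an}$, or at least a de Rham companion over $K$ with a comparison isomorphism), at which point you are essentially back in the Lawrence--Venkatesh setting rather than proving the stated abstract theorem.
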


The techniques of the proof of Theorem \ref{Sparsity, abstract VMHS} will be based on the recent papers by Brunebarbe-Maculan \cite{BM22} and Ellenberg-Lawrence-Venkatesh \cite{ELV23}: the geometric portion of the argument is to construct covers of $X$ under which the preimages of certain subvarieties have large degrees, by using a result in \cite{BM22} about small degree normal cycles; the arithmetic portion of the argument is to apply Broberg's theorem \cite{Bro04} (which builds on fundamental ideas of Bombieri-Pila \cite{BP89} and Heath-Brown \cite{Hea02}) about the number of divisors of bounded degrees required to contain the rational points of bounded height of an arbitrary irreducible closed subvariety of fixed dimension and degree. 
We remark that if the mixed period mapping arising from the VMHS is quasi-finite, then $\mc{L}_{an}$ is a large local system (i.e. the pullback of $\mc{L}_{an}$ by any non-constant morphism from a normal irreducible complex variety has infinite monodromy) and Theorem \ref{Sparsity, abstract VMHS} follows from the main result of the paper by Brunebarbe-Maculan \cite{BM22}.

\subsection{Sparsity of $S$-integral Laurent polynomials with fixed data}
We discuss an application of Theorem \ref{Sparsity, surjective quasi-projective}.
Let $n$ be a positive integer. Let $\tbL$ be the Laurent polynomial ring $\mb{C}[X_1^{\pm},\dots, X_n^{\pm}]$. Let $\tbT:= \Spec \tb{L} \cong (\CC^\times)^n$.
An $n$-dimensional convex polyhedron $\Delta$ in $\RR^n$ whose interior contains the origin is said to be \emph{integral} if all vertices of $\Delta$ belong to the lattice $\ZZ^n$. 
Such polyhedron is said to be \emph{reflexive} if its dual polyhedron 
$$\Delta^*:=\{(x_1,\dots, x_n)\in \RR^n: \sum_{i=1}^n x_iy_i\geq -1 \text{ for all } (y_1,\dots, y_n)\in \Delta\}$$
is integral.
Let $M$ be the free abelian group of rank $n$.
The Newton polyhedron $\Delta(L)$ of a Laurent polynomial $L=\sum_{m\in M} a_m X^m \in \tb{L}$ is the convex hull of integral points $m\in M$ such that $a_m\neq 0$.
Let $\tbL(\Delta)$ be the space of all Laurent polynomials with the Newton polyhedron $\Delta$.

Every Laurent polynomial $L$ defines the affine hypersurface
$$Z_L:=\{X\in \tbT: L(X)=0\}.$$
For any $L= \sum_{m\in M} a_m X^m\in \tbL(\Delta)$ and any $l$-dimensional face $\Delta'$ of $\Delta$, let 
$$L^{\Delta'}(X):= \sum_{m\in\Delta'} a_mX^{m}.$$
A Laurent polynomial $L\in \tbL(\Delta)$ and its affine hypersurface $Z_L$  are said to be \emph{$\Delta$-regular} if for every $l$-dimensional face $\Delta'\subset \Delta$ $(l>0)$, the polynomial equations
$$L^{\Delta'}(X)=X_1\frac{\partial}{\partial X_1} L^{\Delta'}(X)=\cdots =X_n\frac{\partial}{\partial X_n} L^{\Delta'}(X)=0.$$
have no common solutions in $\tbT$.

Gelfand, Kapranov, and Zelevinski introduced the \emph{principal $\Delta$-determinant} $\Disc_{\Delta}(L)$ of  a Laurent polynomial $L$ with Newton polyhedron $\Delta$. It is a certain complex number attached to $L$. Since its definition is complicated to state and we are only using its properties, we refer the interested reader to their book \cite[p. 297]{GKZ94}.
A Laurent polynomial $L$ is $\Delta$-regular if and only if its principal $\Delta$-determinant $\Disc_{\Delta}(L)\neq 0$ \cite[Prop. 4.16]{Bat93}.

In Section \ref{section: Laurent polynomials}, we will use Theorem \ref{Sparsity, surjective quasi-projective} to prove the following theorem:

\begin{thm}\label{counting Laurent polynomials}
Let $S$ be a finite set of rational primes.
Let $\Delta$ be an $n$-dimensional reflexive polyhedron. 
Let $A:=\Delta \cap \ZZ^n$. 
Let $N\in \QQ^\times$.  For any $\varepsilon>0$, there are $O_{\Delta, N, S, \varepsilon}(B^{\varepsilon})$ Laurent polynomials $L=\sum_{m\in A} a_m X^m$ with Newton polyhedron $\Delta$ and principal $\Delta$-determinant $N$, and with $S$-integral coefficients such that 
$$\max_{m\in A, v\in S\cup\{\infty\}} |a_m|_v\leq B.$$
\end{thm}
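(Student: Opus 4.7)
The plan is to apply Theorem \ref{Sparsity, surjective quasi-projective} to a universal family of toric Calabi-Yau hypersurfaces parametrized by the Laurent polynomials $L$, and to convert the resulting covering into a point count using Batyrev's infinitesimal Torelli theorem together with classical bounds on $S$-integral points of tori.

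First, I set up the parameter space. Inside the affine space $\mathbb{A}^A_K$ parametrizing Laurent polynomials $L = \sum_{m\in A} a_m X^m$, let $X$ be the closed subscheme cut out by $\Disc_\Delta(L) = N$. Since $N\neq 0$, every $L \in X$ is $\Delta$-regular \cite[Prop.\ 4.16]{Bat93}, so the affine hypersurface $Z_L \subset \tbT$ is smooth and its closure $\bar{Z}_L$ in the projective toric variety attached to the reflexive polytope $\Delta$ is Calabi-Yau with only canonical singularities \cite[Thm.\ 12.2]{Bat93}. The Laurent polynomials to be counted correspond to a subset of the $S$-integral points of $X$ of naive coefficient-height at most $B$. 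After a smooth projective resolution of the universal family $\{L\mapsto \bar{Z}_L\}$, I obtain a surjective quasi-projective morphism $\pi\colon V\to X$ to which Theorem \ref{Sparsity, surjective quasi-projective} applies.

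Applying it produces a non-empty open $X^* \subset X$ such that, for each $\varepsilon > 0$, the $S$-integral points of $X^*$ of height at most $B$ are covered by $O_\varepsilon(B^\varepsilon)$ geometrically irreducible $K$-subvarieties $W_j$, each contained in a single fiber of the mixed period map $\Phi$ attached to $R^{n-1}(\pi|_{V_\CC})_!\QQ$ (the weight carrying the middle-dimensional Hodge variation on $\bar{Z}_L$). Batyrev's infinitesimal Torelli theorem \cite{Bat93}, combined with properties of the principal $\Delta$-determinant, then tells me that $\Phi$ has injective differential modulo the natural action on $X$ of an $n$-dimensional subgroup of $\tbT \times \CC^\times$ (coordinate changes in $\tbT$ together with rescaling $L$, restricted to the stabilizer of $\Disc_\Delta = N$). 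Consequently, on a dense open of $X$ the fibers of $\Phi$ are finite unions of orbits of this group action; each irreducible $W_j$ therefore lies in (the closure of) a single orbit and has dimension at most $n$.

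Finally, I count $S$-integral points of height at most $B$ on each $W_j$ and on $X\setminus X^*$. An orbit in $X$ is the image of a map $(\CC^\times)^n \to X$, so its $S$-integral points correspond to tuples of $S$-units in $\mathcal{O}_{K,S}^\times$ satisfying coefficient-height constraints; the classical count of $S$-units of bounded height is polylogarithmic in $B$ and in particular is $O_\varepsilon(B^\varepsilon)$. The complement $X\setminus X^*$ is Zariski-closed of strictly smaller dimension and is handled by Noetherian induction, applying Theorem \ref{Sparsity, surjective quasi-projective} iteratively to the restricted family. The main technical obstacle I expect is the precise matching of the mixed period map $\Phi$ produced by Theorem \ref{Sparsity, surjective quasi-projective} with the period map to which Batyrev's Torelli applies — that is, verifying that the admissible graded-polarized VMHS on $R^{n-1}(\pi|_{V_\CC})_!\QQ$ carries the same geometric information as the Hodge structure on $H^{n-1}(\bar{Z}_L)$ studied by Batyrev — together with the careful bookkeeping of the group action on the level set $\{\Disc_\Delta = N\}$.
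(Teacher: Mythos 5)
Your overall strategy — apply Theorem~\ref{Sparsity, surjective quasi-projective} to the universal family and use Batyrev's infinitesimal Torelli to identify fibers of $\Phi$ with torus orbits, then count points per orbit — is the paper's strategy. But your final counting step, the conversion of ``points in one orbit'' into a $B^\varepsilon$ bound, is not correct as written, and this is exactly where the paper does its hardest work. You assert that the $S$-integral points of bounded height in a single orbit ``correspond to tuples of $S$-units in $\mathcal{O}_{K,S}^\times$,'' but this is false: if $L$ and $L'=g\cdot L$ are both $S$-integral with $S$-integral coefficients of height $\leq B$, the group element $g$ acts on each coefficient by a character $\chi_m(g)=a_m(L')/a_m(L)$, and a ratio of two $S$-integers need not be an $S$-unit. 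The classical $S$-unit count therefore does not apply. What actually yields the polylogarithmic bound is the factorization of the principal $\Delta$-determinant: by Gelfand--Kapranov--Zelevinsky, for each vertex $m'$ of $\Delta$ one has $\Disc_\Delta(L)=c\cdot a_{m'}^k\cdot h$ with $h$ an integer polynomial in the $a_m$'s. Fixing $\Disc_\Delta = N$ together with $S$-integrality and $\max_v |a_m|_v\leq B$ then forces each vertex coefficient $a_{m'}$ into a set of size $O_{\Delta,N,S,m'}((\log B)^{|S|})$, and since the characters $(\chi_{m^{(1)}},\dots,\chi_{m^{(n+1)}})$ at $n+1$ well-chosen vertices already determine $g$ (a rank computation on the matrix of vertex exponents), the orbit count follows. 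This is the content of the paper's Lemma~\ref{vertex, coefficient count} and Lemma~\ref{point count in single orbit}, and there is no shortcut through a naive $S$-unit argument.

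Two further mismatches with the paper are worth flagging. First, you cut out $X$ as the closed hypersurface $\{\Disc_\Delta = N\}$; Theorem~\ref{Sparsity, surjective quasi-projective} requires $X$ smooth and irreducible, neither of which is automatic for this level set, and you would have to address this. The paper avoids the issue by working on the Zariski open $Y\subset \QQ^A$ where $\Disc_\Delta\neq 0$ and all vertex coefficients are nonzero (so $Y_\CC$ is exactly the $\Delta$-regular locus), letting the full $(n+1)$-dimensional group $\CC^\times\times\tbT$ act, and only imposing $\Disc_\Delta=N$ in the per-orbit counting lemma. Second, you route through the projective Calabi--Yau closure $\bar{Z}_L$ in the toric variety, whereas Batyrev's infinitesimal Torelli theorem (\cite[Prop.~11.5, Thm.~11.6, Thm.~7.13]{Bat93}) concerns the mixed Hodge structure on $H^{n-1}$ of the \emph{affine} hypersurface $Z_L\subset\tbT$; the relevant local system is $R^{n-1}(f|_{V_\CC})_!\QQ$ for the universal family of affine hypersurfaces (which by smoothness of the fibers is dual to $R^{n-1}(f|_{V_\CC})_*\QQ$), with the kernel of $d\Phi$ identified via $J^1_L\cong T_L\bigl((\CC^\times\times\tbT)\cdot L\bigr)$ and the injectivity of $R^1_L\to\End R^+_L$ from reflexivity \cite[Thm.~12.2(vi)]{Bat93}. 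Passing to the singular projective compactification is not what the cited Torelli statement says, and would require an additional argument to transfer.
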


It was shown by Batyrev that reflexivity of the polyhedron is a necessary and sufficient condition for the characterization of $\Delta$-regular affine hypersurfaces in tori that have Calabi-Yau projective closure in the toric variety with only canonical singularities \cite[Theorem 12.2]{Bat93}. 
Properties of principal $\Delta$-determinant and Batyrev's infinitesimal Torelli theorem \cite{Bat93} will be used to prove Theorem \ref{counting Laurent polynomials}.

The question of what happens in the mixed Hodge setting was asked by Ellenberg-Lawrence-Venkatesh \cite[p. 4]{ELV23}.
They deduced from their main theorem that there are $O_{\varepsilon}(B^{\varepsilon})$ regular $S$-integral homogeneous polynomials with fixed number of variables,  degree $d\geq 3$, and discriminant, up to the action by an arithmetic group.
The condition on the degree in their sparsity result is relaxed in comparison with the non-density result in Lawrence-Venkatesh \cite{LV20}.
It would be interesting to explore the connection between Theorem \ref{counting Laurent polynomials} and this  sparsity result on homogeneous polynomials by Ellenberg-Lawrence-Venkatesh \cite{ELV23},
but it is not obvious because our polyhedron $\Delta$ is assumed to contain the origin.
It is also worth mentioning that in our case for Laurent polynomials, we do not have to count up to the action by a group.

\subsection{Acknowledgements}
I would like to thank Jacob Tsimerman for helpful discussions.
I would also like to thank the referees for their valuable suggestions.
One of the referees pointed out the use of Saito's theory which simplifies the deduction of Theorem \ref{Sparsity, surjective quasi-projective} from Theorem \ref{Sparsity, abstract VMHS}.

\section{Reduction of Theorem \ref{counting Laurent polynomials} to Theorem \ref{Sparsity, surjective quasi-projective}}\label{section: Laurent polynomials}
Let $S$ be a finite set of rational primes.
Let $\Delta$ be an $n$-dimensional reflexive polyhedron.  Let $A:=\Delta \cap \ZZ^n$. 
Let $\CC^A$ be the affine complex space parametrizing coefficients of Laurent polynomials 
$L=\sum_{m\in A} a_mX^m.$
The multiplicative group $\CC^\times$ acts on $\CC^A$ by componentwise multiplication.
For any $m\in A$, write $m=(m_1,\dots, m_n)$.
We define an action of the complex tori $\tbT:=(\CC^\times)^n$ on $\mb{C}^A$ as follows: for any $\tb{t}\in \tb{T}$ and $(a_m)_{m\in A} \in\mb{C}^{A}$, we set 
$\tb{t}\cdot (a_m)_{m\in A}:=(t_1^{m_1}\cdots t_n^{m_n}a_m)_{m\in A}$, i.e. the $|A|$-tuple of coefficients of $X^m$ in 
$$\sum_{m\in A}a_m  (\tb{t}X)^m.$$
The $\CC^\times$-action and the $\tbT$-action preserve $\Delta$-regularity \cite[Prop. 11.2 or Prop. 4.6]{Bat93}.

\subsection{Lemmas on $S$-integral Laurent polynomials}

\begin{lemma}\label{vertex, coefficient count}
Let $N\in \QQ^\times$.
Suppose $L=\sum_{m\in A} a_m X^m$ is a Laurent polynomial with principal $\Delta$-determinant $N$ and $S$-integral coefficients such that 
$$\max_{m\in A, v\in S\cup\{\infty\}} |a_m|_v\leq B.$$
Let $m'$ be a vertex of $\Delta$.
Then $a_{m'}$ can only attain $O_{\Delta, N, S, m'}((\log B)^{|S|})$ possible values.
\end{lemma}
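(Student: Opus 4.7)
The plan is to exploit the Gelfand--Kapranov--Zelevinsky factorization of the principal $\Delta$-determinant into $A$-discriminants along faces. By \cite[Ch.~10, Thm.~1.2]{GKZ94},
\begin{equation*}
\Disc_\Delta(L) \;=\; \pm\prod_{\Gamma}\Delta_{A\cap\Gamma}\bigl(L^{\Gamma}\bigr)^{u(\Gamma, A)},
\end{equation*}
where $\Gamma$ runs over faces of $\Delta$ and the multiplicities $u(\Gamma, A)$ are positive integers. For the zero-dimensional face $\Gamma=\{m'\}$ the $A$-discriminant reduces to the single coefficient: $\Delta_{\{m'\}}(L^{\{m'\}})=a_{m'}$. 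Setting $u:=u(\{m'\}, A)\geq 1$ and letting $R$ denote the product of the remaining face-factors, the formula above becomes
\begin{equation*}
a_{m'}^{u}\cdot R \;=\; \pm N.
\end{equation*}
Since each remaining factor is a polynomial with integer coefficients in the $S$-integral $a_m$'s, the remainder $R$ lies in $\mathcal{O}_{K,S}=\ZZ[1/S]$.

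Next I will control the $p$-adic valuations of $a_{m'}$. For each prime $p\notin S$, $v_p(R)\geq 0$ forces $0\leq v_p(a_{m'})\leq v_p(N)/u$, which vanishes for all but finitely many $p$; hence the prime-factor of $a_{m'}$ supported away from $S$ takes at most $O_{\Delta, N, m'}(1)$ values. For each prime $p\in S$, the hypothesis $|a_{m'}|_p\leq B$ yields $v_p(a_{m'})\geq -\log B/\log p$. The archimedean bound $|a_{m'}|_\infty\leq B$ translates to $\sum_{q}v_q(a_{m'})\log q\leq \log B$, and since the tail $\sum_{q\notin S}v_q(a_{m'})\log q$ is controlled by $O_N(1)$, rearranging gives the matching upper bound
\begin{equation*}
v_p(a_{m'})\;\leq\; \frac{|S|\log B+O_N(1)}{\log p}
\end{equation*}
for each $p\in S$. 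Each $v_p(a_{m'})$ with $p\in S$ therefore ranges over an interval of length $O_{S,N}(\log B)$, contributing $O_{S,N}\bigl((\log B)^{|S|}\bigr)$ possibilities for the tuple $(v_p(a_{m'}))_{p\in S}$.

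Multiplying by the $O_{\Delta, N, m'}(1)$ possibilities for the outside-$S$ part and a sign yields $O_{\Delta, N, S, m'}\bigl((\log B)^{|S|}\bigr)$ possible values for $a_{m'}$, as required. The only non-elementary input is the GKZ product formula together with the positivity of the vertex multiplicity $u(\{m'\}, A)$; everything after that is routine counting of $S$-integers subject to upper and lower valuation bounds, so I expect the main effort of the write-up to lie in citing these ingredients precisely.
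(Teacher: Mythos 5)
Your proof is correct, and it reaches the same conclusion via a closely related but technically distinct route. Both you and the paper exploit the fact that a vertex coefficient appears as a factor in the principal $\Delta$-determinant; the paper cites \cite[Cor.~2.5, p.~318]{GKZ94} directly, which gives $\Disc_\Delta(L) = c\cdot a_{m'}^k\cdot h$ with $c\in\QQ$ and $h$ an integer polynomial in the coefficients, whereas you derive the same shape of factorization from the GKZ prime-factorization of the principal $A$-determinant into $A$-discriminants along faces. The real divergence is in the counting. The paper writes every rational quantity as a fraction in lowest terms, concludes that the numerator $r'$ of $a_{m'}$ divides $r_N q q''$ and that the denominator $q'=q_{m'}$ is an $S$-smooth integer with small exponents, and then counts divisors. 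You instead control the full tuple of $p$-adic valuations $(v_p(a_{m'}))_p$ in one pass: for $p\notin S$ the relation $a_{m'}^u R=\pm N$ with $R\in\ZZ[1/S]$ pins $v_p(a_{m'})$ to a finite interval $[0,v_p(N)/u]$, while for $p\in S$ the hypothesis $|a_{m'}|_p\leq B$ gives a lower bound and the archimedean bound plus the other ultrametric bounds give a matching upper bound of size $O_S(\log B)$. This is cleaner: you get the claimed exponent $|S|$ with a single product, while the paper's ``multiply all the bounds'' step as literally written contributes factors of $O((\log B)^{|S|})$ both from $r'$ and from $q'$ and therefore would naively give exponent $2|S|$ (harmless for the downstream $B^\varepsilon$ conclusion, but your bookkeeping is tighter). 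The only points you should be careful to justify from the reference are that the vertex factor in the GKZ product formula really is the single coefficient $a_{m'}$ with multiplicity $u(\{m'\},A)\geq 1$, and that each face $A$-discriminant is a polynomial with integer coefficients (so that $R\in\ZZ[1/S]$); both are standard but should be pinned down precisely, which is essentially what the paper's citation of Cor.~2.5 accomplishes in one stroke.
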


\begin{proof}
By \cite[Cor. 2.5, p. 318]{GKZ94}, since $m'$ is a vertex of $\Delta$, we have $\Disc_{\Delta}(L)=c\cdot a_{m'}^k\cdot h$, where $c\in \QQ$, $k$ is some non-negative integer, and $h$ is the value at $(a_m)_{m\in A}$ of some polynomial $\eta$ in the ring $\ZZ[A]$. Here $c$ depends only on $\Delta$, while  $k$ and $\eta$ depend only on $\Delta$ and $m'$. Write $c=r/q$, $a_{m'}=r'/q'$, $h=r''/q''$, $a_m=r_m/q_m$, and $N=r_N/q_N$, which are fractions in the lowest terms with $q, q', q'', q_m, q_N>0$.
We have
$$\frac{r}{q}\cdot \left(\frac{r'}{q'}\right)^k \cdot \frac{r''}{q''}=\Disc_{\Delta}(L)=N=\frac{r_N}{q_N},$$
so $r'$ divides $r_Nqq''$.
There are only $O_{\Delta, N}(1)$ choices for the divisors of $r_N$ and $q$. 
We also know that $q''$ divides a monomial $\prod_m q_m^{f_m}$,
where the powers $f_m$ depends only on $\eta$, which in turn depends only on $\Delta$ and $m'$. 
Write $S=\{v_1,\dots, v_{\ell}\}$ and $q_m=v_1^{e_{m,1}}\cdots v_{\ell}^{e_{m,\ell}}$.
The monomial is then equal to 
$$\prod_{i=1}^{\ell} v_i^{\sum_{m\in A} f_me_{m,i}}.$$
For all $i=1,\dots, \ell$ and $m\in A$,  we have $v_i^{e_{m,i}}\leq B$, so
$$\sum_{m\in A} f_m e_{m,i}\leq \sum_{m\in A}\frac{f_m \log B}{\log v_i}=O_{\Delta, m', S}(\log B),$$
thus there are only $O_{\Delta, m', S}((\log B)^{|S|})$ choices for the divisors of $q''$.
Similarly, $q'=q_{m'}$ can only attain $O_{S}((\log B)^{|S|})$ possible values. 
Multiplying all the bounds together, the proof is completed.
\end{proof}

\begin{lemma}\label{point count in single orbit}
Let $N\in \QQ^\times$. 
Each $(\CC^\times \times \tbT)$-orbit  has 
$$O_{\Delta, N, S}((\log B)^{(n+1)|S|})$$
Laurent polynomials with Newton polyhedron $\Delta$, principal $\Delta$-determinant $N$, and $S$-integral coefficients $a_m$ such that 
$$\max_{m\in A, v\in S\cup\{\infty\}} |a_m|_v\leq B.$$ 
\end{lemma}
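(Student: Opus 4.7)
The plan is to reduce the count within a single orbit to counting the vertex coefficients of $L$, and then invoke Lemma \ref{vertex, coefficient count}. Since $\Delta$ is $n$-dimensional, I will first pick $n+1$ affinely independent vertices $m_0, m_1, \dots, m_n$ of $\Delta$; these depend only on $\Delta$. The fact that any Laurent polynomial $L=\sum a_m X^m$ with Newton polyhedron exactly $\Delta$ must satisfy $a_{m_i}\neq 0$ for each vertex $m_i$ will be used implicitly.

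The key algebraic observation is that the evaluation map
$$e:\CC^\times\times \tbT\longrightarrow (\CC^\times)^{n+1},\qquad (\lambda,\mathbf{t})\mapsto (\lambda\mathbf{t}^{m_0},\lambda\mathbf{t}^{m_1},\dots,\lambda\mathbf{t}^{m_n}),$$
has finite fibers. Indeed, quotienting by the first coordinate gives $\mathbf{t}\mapsto(\mathbf{t}^{m_1-m_0},\dots,\mathbf{t}^{m_n-m_0})$, which is an endomorphism of the rank-$n$ torus whose character matrix has rows $m_i-m_0$; by affine independence this matrix is non-degenerate, so the map is an isogeny, and then $\lambda$ is determined by $\lambda\mathbf{t}^{m_0}$ once $\mathbf{t}$ is determined up to a finite group. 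The size of this finite group is bounded by a constant depending only on $\Delta$.

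Fixing now a base point $L^{(0)}=\sum a^{(0)}_m X^m$ in the orbit, any other point in the orbit is of the form $(\lambda\mathbf{t}^m a^{(0)}_m)_m$, and since $a^{(0)}_{m_i}\neq 0$, specifying the tuple $(a_{m_0},\dots,a_{m_n})$ is equivalent to specifying $(\lambda\mathbf{t}^{m_0},\dots,\lambda\mathbf{t}^{m_n})$, which by the previous step determines $(\lambda,\mathbf{t})$ up to an $O_\Delta(1)$ ambiguity, hence determines $L$ up to the same ambiguity. Thus the number of $L$ in the orbit satisfying the stated constraints is at most $O_\Delta(1)$ times the number of admissible tuples $(a_{m_0},\dots,a_{m_n})$.

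Finally, since each $m_i$ is a vertex of $\Delta$, Lemma \ref{vertex, coefficient count} gives $O_{\Delta,N,S}((\log B)^{|S|})$ possibilities for each $a_{m_i}$ individually (absorbing the $m_i$-dependence into the $\Delta$-dependence, as there are only finitely many vertices). Multiplying the $n+1$ bounds and the finite fiber factor yields the desired $O_{\Delta,N,S}((\log B)^{(n+1)|S|})$. The only mildly subtle point is the finite-fiber claim for $e$; this is essentially linear algebra, and is the step I would double-check most carefully, although no real obstacle arises because affine independence of the $m_i$ was built into the choice.
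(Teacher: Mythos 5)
Your proof is correct, and it takes a somewhat different route from the paper's. Both arguments hinge on the same pivot -- use $n+1$ affinely independent vertices of $\Delta$, pin down the acting group element from the values of the corresponding coefficients, and then invoke Lemma~\ref{vertex, coefficient count} once per vertex -- but the way the group element is pinned down differs. The paper works in polar coordinates: it writes each component of $(\alpha,t)$ as $re^{i\theta}$, uses the reality of $S$-integral coefficients to restrict each argument $\theta_\alpha+m\cdot\theta$ to finitely many residues, and then solves two real linear systems (one for the $\log r$'s, one for the $\theta$'s) via the full-rank $(n+1)\times(n+1)$ matrix $R$ whose rows are $(1,m^{(i)})$. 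Your version replaces all of this with a single algebraic observation: the evaluation map $e\colon\CC^\times\times\tbT\to(\CC^\times)^{n+1}$ sending $(\lambda,\mathbf t)$ to $(\lambda\mathbf t^{m_0},\dots,\lambda\mathbf t^{m_n})$ is a homomorphism of $(n+1)$-tori with finite kernel (the character matrix $(m_i-m_0)_{i=1}^n$ has nonzero determinant by affine independence, and $\lambda$ is then recovered from $\lambda\mathbf t^{m_0}$), so the tuple of vertex coefficients determines $(\lambda,\mathbf t)$, hence $L$, up to an $O_\Delta(1)$ ambiguity. This is cleaner: it avoids the modulus/argument split entirely and sidesteps the mildly delicate ``argument bounded by $2\pi(1+|m_1|+\cdots+|m_n|)$'' step in the paper. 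It also makes clear that the full-rank matrix $R$ in the paper's proof is just the character matrix of $e$ written additively, so the two approaches are really the same linear algebra packaged multiplicatively versus additively. The one thing your write-up handles a bit informally is the bookkeeping of multiple $(\lambda,\mathbf t)$ mapping to the same $L$ (stabilizer vs.\ kernel of $e$), but since any such collapse only decreases the count, it causes no harm, and the finite-kernel bound $O_\Delta(1)$ correctly dominates the fiber of the map from admissible tuples to Laurent polynomials in the orbit.
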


\begin{proof}
Let $L=\sum_{m\in A} a_m X^m$ be such Laurent polynomial in the orbit.
Let $(\alpha,\tb{t})\in \CC^\times \times \tbT$.
Suppose $(\alpha, \tb{t})\cdot L$ is again such Laurent polynomial.
Write $\alpha=r_{\alpha} e^{i\theta_{\alpha}}$ and $\tb{t}=(r_1e^{\theta_1}, \cdots, r_ne^{\theta_n})$, where $r_{\alpha}, r_1,\dots, r_n>0$ and $0\leq \theta_{\alpha}, \theta_1,\dots, \theta_n< 2\pi$. Write $m=(m_1,\dots, m_n)$.
We have
$$(\alpha, \tb{t})\cdot L
=\sum_{m\in A}\alpha a_m \tb{t}^m X^m
=\sum_{m\in A} r_{\alpha} a_m r_1^{m_1}\cdots r_n^{m_n} e^{i(\theta_{\alpha}+m_1\theta_1+\cdots+m_n\theta_n)}X^m.
$$
Since it has $S$-integral coefficients, $\theta_{\alpha}+m_1\theta_1+\cdots+m_n\theta_n=\pi$ or $0$, modulo $2\pi$, for each $m\in A$; so this sum can only attain finitely many possible values since it is bounded by $2\pi(1+|m_1|+\cdots +|m_n|)$.
Denote this finite set of possible values by $\Omega_m$, for each $m\in A$.
Since $\Delta$ is $n$-dimensional, we can pick vertices $m^{(1)},\dots, m^{(n)}\in A$ such that the $n\times n$ matrix $(m_{ij})$, where $m_{ij}=m_j^{(i)}$, has rank $n$.
Since $\Delta$ has at least $n+1$ vertices, we can pick a vertex $m^{(n+1)}$ pairwise distinct from $m^{(1)},\dots, m^{(n)}$.
Since $m^{(n+1)}$ cannot lie in the simplex $[m^{(1)},\dots, m^{(n)}]$, 
the system 
$$
\left\{
\begin{array}{l}
s_1+\cdots +s_n = 1 \\
s_1m^{(1)}+\cdots +s_n m^{(n)} = m^{(n+1)}\\
\end{array}
\right.
$$
has no solution. 
Therefore,
the $(n+1)\times (n+1)$ matrix
$$
R:=
\left(
\begin{array}{c|c}
\text{all } 1 & (m_{ij})\\
\hline
 1 & m^{(n+1)}\\
\end{array}
\right)
$$
has rank $n+1$.
For each $i=1,\dots, n+1$, since $m^{(i)}$ is a vertex,  the coefficient of the term $X^{m^{(i)}}$ in $(\alpha,\tb{t}) \cdot L$ can only attain $O_{\Delta, N, S, m^{(i)}}((\log B)^{|S|})$ possible values by Lemma \ref{vertex, coefficient count}.
Let $d_i$ be one of such possible value for each $i$. 
Since $(\alpha,\tb{t})\cdot L$ has Newton polyhedron $\Delta$, we know $d_i\neq 0$.
Since $R$ has full rank, the system
$$\log r_{\alpha}+m_1^{(i)}\log r_1+\cdots + m_n^{(i)}\log r_n = -\log |a_{m^{(i)}}|+\log |d_i|, \quad i=1,\dots, n+1.$$
has a unique solution for $(r_{\alpha}, r_1,\dots, r_n)$. 
Let $(\theta^{(1)},\dots, \theta^{(n+1)})\in \Omega_{m^{(1)}}\times \cdots \times \Omega_{m^{(n+1)}}$.
Since $R$ has full rank, the system
$$\theta_{\alpha}+m_1^{(i)}\theta_1+\cdots+m_n^{(i)}\theta_n=\theta^{(i)},\quad  i=1,\dots, n+1$$
has a unique solution for $(\theta_{\alpha}, \theta_1,\dots, \theta_n)$. 
\end{proof}

\subsection{Jacobian ideals and Jacobian rings}
We first recall the notions of Jacobian ideals and Jacobian rings of Laurent polynomials in Batyrev's paper \cite{Bat93}.
Let $S_{\Delta}$ be the subalgebra of $\tb{L}[X_0]=\CC [X_0, X_1^{\pm}, \dots, X_n^{\pm}]$ generated as a $\CC$-vector space by elements of $\CC$ and all monomials $X_0^kX_1^{m_1}\cdots X_n^{m_n}$ such that the rational point $(m_1/k,\dots, m_n/k)$  belongs to $\Delta$.
The standard grading of $\tbL[X_0]$ induces the grading of $S_{\Delta}$.
Let $S_{\Delta}^i$ be the $i$-th homogeneous component.
Let $S_{\Delta}^+$ be the maximal homogeneous ideal in $S_{\Delta}$.
For any $L\in \tb{L}$, define $L(X_0, X):= X_0L(X)-1$. 
For any $i=0,\dots, n$, 
$$L_i(X_0,X):= X_i\frac{\partial}{\partial X_i} L(X_0,X).$$
The ideal $J_{L, \Delta}$ of $S_{\Delta}$ generated by $L_0,L_1\dots, L_n$ is called the \emph{Jacobian ideal} of $L$.
The quotient ring $R_L:=S_{\Delta}/J_{L,\Delta}$ is called the \emph{Jacobian ring} of $L$.
The grading of $S_{\Delta}$ induces a grading of $R_L$.
Let $R_L^i$ be the $i$-th homogeneous component. 
Let $R_L^+$ be the maximal homogeneous ideal in $R_L$.

\subsection{Reduction of Theorem \ref{counting Laurent polynomials} to Theorem \ref{Sparsity, surjective quasi-projective}}
Let $\CC^A_{\Delta, \reg}$ be Zariski open subset of $\CC^A$ parametrizing $\Delta$-regular Laurent polynomials with Newton polyhedron $\Delta$.
For generic $L_{\Delta}=\sum_{m\in A} a_m X^m$ with Newton polyhedron $\Delta$, the principal $\Delta$-determinant $\Disc_{\Delta}(L_{\Delta})$ is a polynomial over $\QQ$ in the indeterminates $a_m$ \cite[Cor. 2.5, p. 318]{GKZ94}.
Let $Y$ be the Zariski open subset of $\QQ^A$ defined by  $\Disc_{\Delta}(L_{\Delta})\neq 0$ and $a_{m'}\neq 0$ for any vertex $m'$ of $\Delta$.
We have $Y_{\CC}=\CC^A_{\Delta,\reg}$.
Let $f:V\ra Y$ be the universal family of the affine hypersurfaces defined by these Laurent polynomials.

By results of Batyrev \cite[Theorem 8.2, Cor. 3.14]{Bat93}, the dimensions of the weight filtrations and the Hodge filtrations for the $(n-1)$-th cohomology stay the same as $L$ varies in $Y_{\CC}$. 
By computation of the Gauss-Manin connection of the universal family $f_{\CC}: V_{\CC}\ra Y_{\CC}$ by Batyrev \cite[Prop. 11.5, Theorem 11.6, Theorem 7.13]{Bat93}, the differential of the period mapping at any $L\in Y_{\CC}$ is induced by the composition $S_{\Delta}^1\ra R^1_{L}\ra \End R_L^+$, where the first map comes from quotienting $J_{L}^1:=J_{L,\Delta}\cap S^1_{\Delta}$, and the second map comes from the $R_L$-module multiplication $R^1_L\otimes R_L^+\ra R_L^+$. As $\Delta$ is reflexive, $R_L^1\ra \End R^+_L$ is injective by \cite[Theorem 12.2 (vi)]{Bat93}.
By \cite[Prop. 11.2]{Bat93}, the Jacobian ideal $J^1_L$ is isomorphic to the tangent space of the orbit 
$(\CC^\times \times \tbT)\cdot L$ at $L$.
Therefore, a tangent vector at $L$ is in the kernel of the differential of $\Phi$ if and only if it is tangent to 
$(\CC^\times \times \tbT)\cdot L$.
On one hand, $\Phi$ has zero differential at every point in $(\CC^\times \times \tbT)\cdot L$, so $\Phi$ is constant on $(\CC^\times \times \tbT)\cdot L$. 
On the other hand, dimension of a fiber of $\Phi$ is at most the dimension of the kernel of the differential of $\Phi$ at a generic point in the fiber. This dimension is in turn smaller than the dimension of a $(\CC^\times \times \tb{T})$-orbit contained in the fiber by what we have proved. 
Hence, any connected component of a fiber of $\Phi$ is a $(\CC^\times \times \tb{T})$-orbit. 

By Theorem \ref{Sparsity, surjective quasi-projective}, there exists a non-empty Zariski open subset $Y^*$ of $Y$ such that for any $\varepsilon>0$, the $S$-integral points of $Y^*$ with height at most $B$ are covered by $O_{\Delta, N,\varepsilon}(B^{\varepsilon})$ geometrically irreducible $\QQ$-subvarieties, whose collection is denoted by $\{Y_{\alpha}\}$, each lying in a single fiber of the period mapping restricted to $Y_{\CC}^*$.

Since the affine hypersurfaces in the universal family $f$ are smooth, and since we are looking at the middle cohomology, the sheaf $(R^{n-1}(f|_{V_{\CC}})_! \QQ_{V_{\CC}})|_{Y^*_{\CC}}$ is dual to $(R^{n-1}(f|_{V_{\CC}})_* \QQ_{V_{\CC}})|_{Y^*_{\CC}}$ by \cite[Lemma-Def. 6.25, Cor. 6.26]{PS}. Hence, we can replace the period mapping attached to higher direct image with compact support in the previous paragraph by the period mapping attached to the usual higher direct image.

Since each $Y_{\alpha}$ is geometrically irreducible and is contained in a fiber of the period mapping, it is contained in a  $(\CC^\times \times \tbT)$-orbit.
Therefore, the $S$-integral points of $Y^*$ with height at most $B$ are covered by $O_{\Delta, N, \varepsilon}(B^\varepsilon)$ $(\CC^\times \times \tbT)$-orbits. 

By applying Theorem \ref{Sparsity, surjective quasi-projective} again, with irreducible components of $Y\bs Y^*$ instead of $Y$; and repeat with irreducible subvarieties of smaller and smaller dimensions, we know that the $\Delta$-regular Laurent polynomials with Newton polyhedron $\Delta$ and $S$-integral coefficients of heights at most $B$, are in $O_{\Delta, N, \varepsilon}(B^\varepsilon)$ $(\CC^\times \times \tbT)$-orbits. By Lemma \ref{point count in single orbit}, each orbit has $O_{\Delta, N, S, \varepsilon}(B^\varepsilon)$ such Laurent polynomials with principal $\Delta$-determinant $N$.  Theorem \ref{counting Laurent polynomials} follows.

\section{Proofs of Theorem \ref{Sparsity, surjective quasi-projective} and Theorem \ref{Sparsity, abstract VMHS}}

\subsection{Proof of Theorem \ref{Sparsity, abstract VMHS}}
The techniques are based on \cite{BM22} and \cite{ELV23}.
Let $\ol{X}$ be the Zariski closure of $X$ in $\mb{P}^m_K$.
Let $Z=\ol{X}\bs X$.
Let $L=\mc{O}_{\ol{X}}(1)$ be the hyperplane line bundle on $\ol{X}$.
Since the statement of Theorem  \ref{Sparsity, abstract VMHS} is only about smooth variety $X$, we can assume that $\ol{X}$ is smooth by resolution of singularities \cite{Hir64}.
By enlarging $S$ if necessary, we can choose a smooth $\mc{O}_{K,S}$-model $\mc{X}$ of $X$.

A \emph{normal cycle} on a normal variety $W$ over a field of characteristic 0 is a finite morphism $T\ra W$ which is birational onto its image, where $T$ is a geometrically irreducible normal variety.
For any complete variety $Q$ over $K$ with an ample line bundle $J$, we let 
$$\deg(Q, J):=\lim_{k\ra\infty} \frac{\dim_K\Gamma(Q, J^{\otimes k})}{k^{\dim Q}}.$$
For any $x\in X(\CC)$, the local systems $\mc{L}_{an}$ and $\mc{L}_{n,an}:=\mc{L}_{an}\otimes (\ZZ/p^n\ZZ)$ induce the monodromy representation
$\pi_1^{\topo}(X_{\CC}^{an},x)\ra \Aut \mc{L}_{an,x}$
 and the mod $p^n$ monodromy representation
$\pi_1^{\topo}(X_{\CC}^{an},x)\ra  \Aut \mc{L}_{n, an,x}$ respectively.
For any $x\in X(\ol{K})$, the \'{e}tale local system $\mc{L}_{n,\CC,\et}$ induces an \'{e}tale monodromy representation 
$\pi_1^{\et}(X_{\CC},x)\ra  \Aut \mc{L}_{n,\et,\CC,x}$.
By the assumption $(\mc{L}_{n,\et,\CC})_{an}\simeq \mc{L}_{n,an}$, we have a commutative diagram
\begin{center}
\begin{tikzcd}
\pi_1^{\topo}(X_{\CC}^{an},x)\arrow[r] \arrow[d]& \Aut \mc{L}_{n,an,x}\arrow[d]
\\\pi_1^{\et}(X_{\CC},x)\arrow[r] &\Aut \mc{L}_{n,\et,\CC,x}.
\end{tikzcd}
\end{center}

\begin{lemma}\label{unbound monodromy}
Let $V$ be a complex irreducible subvariety of $\ol{X}_{\CC}$ such that $V$ is not contained in $Z_{\CC}$ and that $V^{\circ}:=V\cap X_{\CC}$ is not contained in a fiber of $\Phi$. 
Let $V^{\circ,s}$ be the smooth locus of $V^{\circ}$.
Then 
$\pi_1^{\topo}(V^{\circ,s})\ra \Aut \mc{L}_{an,x}$ has infinite image.  
\end{lemma}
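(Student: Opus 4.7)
Plan: I argue by contradiction. Suppose the image $H$ of $\rho:\pi_1^{\topo}(V^{\circ,s},x)\to\Aut\mc{L}_{an,x}$ is finite, and let $u:\tilde V\to V^{\circ,s}$ be the finite \'{e}tale cover of degree $|H|$ corresponding to $\ker\rho$. Then the pullback local system $u^{-1}\mc{L}_{an}|_{V^{\circ,s}}$ is trivial, the pullback admissible graded-polarized VMHS on $\tilde V$ has trivial underlying local system, and the induced period map lifts to a holomorphic map $\tilde\Phi:\tilde V\to D$ into the classifying space itself (not the quotient $\Gamma\backslash D$), making the evident square with $V^{\circ,s}\hookrightarrow X_{\CC}^{an}$ and $D\to\Gamma\backslash D$ commute.

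Next, using Hironaka's resolution applied to the closure of $\tilde V$ in a suitable projective embedding, choose a smooth projective compactification $\overline{\tilde V}$ of $\tilde V$ whose boundary is a simple normal crossings divisor. Because the local monodromies of the pulled-back VMHS around every boundary component are trivial, Schmid's nilpotent-orbit theorem (in the pure case) together with Kashiwara's admissibility and the $SL_2$-orbit machinery of Cattani--Kaplan--Schmid and Pearlstein (in the graded-polarized mixed setting) allow the Hodge filtration to extend holomorphically across the boundary. Consequently $\tilde\Phi$ extends to a holomorphic period map $\overline{\tilde\Phi}:\overline{\tilde V}\to D$.

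I then invoke Griffiths's constancy theorem, extended to the mixed graded-polarized setting (via curvature negativity of the horizontal distribution on $D$, as elaborated by Brunebarbe and subsequent authors): any period map from a compact K\"ahler manifold to the classifying space $D$ that satisfies Griffiths transversality is constant. Applied to $\overline{\tilde\Phi}$, this forces $\tilde\Phi$ to be constant. Since $u$ is surjective, $\Phi|_{V^{\circ,s}}$ is constant, and by continuity and Zariski density of $V^{\circ,s}$ in $V^{\circ}$, also $\Phi|_{V^{\circ}}$ is constant, placing $V^{\circ}$ inside a single fiber of $\Phi$ and contradicting the hypothesis.

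I expect the principal obstacle to be the mixed-Hodge form of Griffiths's constancy theorem together with the extension-across-the-boundary step: both are classical in the pure Hodge case (going back to Griffiths and Schmid) but require the graded-polarized admissibility hypothesis to be invoked with care in the mixed setting, so as to control the behaviour of the Hodge filtration over the normal crossings boundary and to obtain the requisite negative-curvature bound in the horizontal directions of the mixed classifying space.
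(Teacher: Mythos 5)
Your opening moves coincide with the paper's: argue by contradiction, pass to the finite \'{e}tale cover of $V^{\circ,s}$ killing the finite monodromy image, and conclude by showing the resulting VMHS with trivial monodromy has constant period map. Where the paper simply invokes the rigidity theorem for admissible graded-polarized VMHS (Brylinski--Zucker, Theorem 7.12), which applies directly over the quasi-projective base $\wt{V^{\circ,s}}$, you instead try to reprove constancy by compactifying, extending the period map across an SNC boundary, and then appealing to ``Griffiths's constancy theorem, extended to the mixed graded-polarized setting via curvature negativity of the horizontal distribution on $D$.'' That last step is a genuine gap: mixed classifying spaces are \emph{not} negatively curved in horizontal directions. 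A mixed period domain fibers over the product of the pure graded period domains with fibers that are torsors under unipotent complex groups (the extension data), and these fiber directions are flat and in general entirely horizontal. The simplest example already shows this: for variations of extensions of $\ZZ(0)$ by $\ZZ(1)$ the classifying space is $D\cong\CC$, Griffiths transversality imposes no condition, and every direction is horizontal with zero curvature. So the constancy statement you need cannot be quoted as a curvature theorem; it would have to be assembled by hand (project to the pure graded domains and use the classical pure-case rigidity/fixed-part theorem there, then observe that the induced map to the affine extension-data fibers from a compact manifold is constant by the maximum principle), or else replaced outright by the known rigidity theorem for admissible VMHS with trivial monodromy, which is exactly what the paper cites.

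Two further points, secondary but worth noting. First, your extension-across-the-boundary step needs more care than stated: admissibility plus trivial local monodromy gives an extension of the Hodge filtration in the canonical extension, but you must also argue that the boundary values are genuine graded-polarized mixed Hodge structures with the same weight filtration, i.e.\ that $\overline{\tilde\Phi}$ lands in $D$ and not merely in its ``compact dual''-type enlargement; in the pure case this is Schmid's theorem for $N=0$, and in the mixed case it again amounts to the admissibility/rigidity package you are trying to avoid quoting. Second, once the correct rigidity theorem is invoked, the whole compactification and boundary-extension apparatus is unnecessary: rigidity holds over the quasi-projective base itself, which is why the paper's proof is three lines long.
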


\begin{proof}
Suppose $\pi_1^{\topo}(V^{\circ,s})\ra \Aut \mc{L}_{an,x}$ has finite image. 
Restrict the VMHS on $V^{\circ,s}$.  Passing to a finite cover $\wt{V^{\circ,s}}$ of $V^{\circ,s}$,
we get a VMHS on $V^{\circ,s}$ with trivial monodromy.
By rigidity \cite[Theorem 7.12]{BZ}, this VMHS is trivial, i.e. $\wt{\Phi}(V^{\circ,s})$ is a point, so $\Phi(V^{\circ})$ is a point, which contradicts that $V^{\circ}$ is not contained in a fiber of $\Phi$.
\end{proof}

\begin{lemma}\label{Construction of cover}
 For any $D\geq 1$, there exist a finite group $G$, a finite morphism $\tau: \ol{X}'\ra  \ol{X}$ of $K$-varieties, and an embedding $G\hookrightarrow \Aut (\ol{X}'/\ol{X})$ such that 
\begin{itemize}
\item $\tau|_{X}$ is finite \'{e}tale Galois with deck group $G$ and it extends to a finite \'{e}tale cover of the smooth integral model $\mc{X}$.
\item Let $U$ be an irreducible closed complex subvariety of $\ol{X}_{\CC}$. Let $U^{\circ}:=U\cap X_{\CC}$. Suppose  $U$ is not contained in $Z_{\CC}$ and $U^{\circ, an}$ is not contained in a single fiber of $\Phi$.
Let $Q$ be any irreducible component of $\tau^{-1}U$, endowed with the reduced structure, such that the induced finite map $\tau|_Q:Q\ra U$ is dominant. Then degree $\deg (Q, \tau^*L|_Q)\geq D$.
\end{itemize}
\end{lemma}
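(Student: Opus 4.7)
The plan is to take $\tau$ to be the normalization of $\ol{X}$ in a finite \'{e}tale Galois $X$-cover arising from a sufficiently high mod $p^n$ monodromy representation. For each positive integer $n$, the \'{e}tale local system $\mc{L}_{n,\et}$ yields a continuous representation $\rho_n\colon \pi_1^{\et}(X_K, \ol{x}) \ra \Aut(\mc{L}_{n,\et,\ol{x}})$ with finite image $G_n$, hence a finite \'{e}tale Galois cover $X'_n \ra X$ with deck group $G_n$. After enlarging $S$ if necessary, this cover extends to a finite \'{e}tale Galois cover of the smooth $\mc{O}_{K,S}$-model $\mc{X}$. I would then take $\ol{X}'_n$ to be the normalization of $\ol{X}$ in the function field of $X'_n$; the $G_n$-action extends uniquely to $\ol{X}'_n$, giving an embedding $G_n \hookrightarrow \Aut(\ol{X}'_n / \ol{X})$ together with the finite morphism $\tau_n\colon \ol{X}'_n \ra \ol{X}$.

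Fix an admissible $U$. Since $U \not\subseteq Z_\CC$, the smooth locus $U^{\circ,s}$ is a non-empty open of $X_\CC$, and $\tau_n^{-1}(U^{\circ,s}) \ra U^{\circ,s}$ is \'{e}tale with monodromy equal, via the commutative square relating topological and \'{e}tale fundamental groups that appears just before Lemma \ref{unbound monodromy}, to the image $H_n$ of $\pi_1^{\topo}(U^{\circ,s})$ in $G_n$. Every irreducible component $Q$ of $\tau_n^{-1}U$ dominating $U$ is then the closure of a connected component of $\tau_n^{-1}(U^{\circ,s})$, so $\tau_n|_Q\colon Q \ra U$ has generic degree $|H_n|$. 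Applying the projection formula to the ample line bundle $L|_U$ on the projective variety $U$,
$$\deg(Q, \tau_n^*L|_Q) \;=\; |H_n|\cdot \deg(U, L|_U) \;\geq\; |H_n|,$$
since $\dim U \geq 1$ (because $U^\circ$ is not in a fiber of $\Phi$). So it is enough to choose $n$ with $|H_n|\geq D$ for every admissible $U$ simultaneously.

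For a fixed $U$, Lemma \ref{unbound monodromy} combined with the $p$-torsion-freeness of $\mc{L}_{an}$ (which guarantees that $\pi_1^{\topo}(U^{\circ,s})$ injects into $\varprojlim_n \Aut(\mc{L}_{n,an,\ol{x}})$) gives $|H_n|\to \infty$, but only pointwise in $U$. The uniform upgrade would be supplied by the small-degree normal cycles theorem of Brunebarbe--Maculan \cite{BM22}, applied in the tower $\{\ol{X}'_n\}$: it should produce $n_0=n_0(D)$ such that for $n \geq n_0$, every normal cycle $T \ra \ol{X}'_n$ with $\deg(T, \tau_n^*L|_T) < D$ has image in $\ol{X}$ either contained in $Z$ or collapsed to a single point by $\Phi$. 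Applying this to the normalization of any hypothetical low-degree $Q$ contradicts one of the hypotheses on $U$, so $\tau := \tau_{n_0}$ will satisfy the lemma.

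The main obstacle is precisely this last step: turning the pointwise unboundedness coming from Lemma \ref{unbound monodromy} into the uniform lower bound $|H_n|\geq D$ for all admissible $U$ at once. The structural input from \cite{BM22} on small-degree normal cycles in a tower of \'{e}tale covers is exactly what bridges this gap. A secondary technical issue is spreading the tower $X'_n \ra X$ out to a compatible tower of finite \'{e}tale covers of $\mc{X}$ over $\mc{O}_{K,S}$, which should follow after enlarging $S$ from the fact that $\mc{L}_{n,\et}$ is already defined over $X_K$ together with standard spreading-out arguments.
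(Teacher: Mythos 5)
Your overall strategy is close to the paper's: construct a cover from the mod $p^n$ monodromy of $\mc{L}_{n,\et}$, extend it to the integral model, normalize, and bound the degree of components over $U$ via the projection formula. You also correctly pinpoint the crux --- the pointwise unboundedness of $|H_n|$ from Lemma~\ref{unbound monodromy} must be upgraded to a uniform lower bound across all admissible $U$ --- and correctly attribute the bridge to \cite{BM22}. But the way you propose to invoke \cite{BM22} is not the mechanism that actually works, and there is one arithmetic slip in the degree estimate.

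On the uniformity step: the paper does not apply a small-degree-normal-cycle result in the tower $\{\ol{X}'_n\}$. Instead, \cite[Lemma~2.9]{BM22} is applied \emph{once} to the fixed variety $\ol{X}_\CC$: it says that, up to conjugation, only finitely many subgroups $E_1,\dots,E_r \subset \pi_1^{\et}(X_\CC)$ arise as images of $\pi_1^{\et}(f^{-1}(X_\CC))$ for normal cycles $f\colon T\ra \ol{X}_\CC$ with $\deg(T,f^*L_\CC)\leq D$ satisfying the two geometric hypotheses. With this finite list fixed \emph{in advance}, Lemma~\ref{unbound monodromy} (plus \cite[Lemma~11]{Kol03} to pass between the relevant fundamental groups) shows each $E_i$ has mod $p^n$ image growing without bound, so a single large $n$ makes all $r$ images large simultaneously; only then is $G$ and the cover $\tau$ constructed. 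Your version asks for a statement about normal cycles of degree $<D$ in $\ol{X}'_n$, which varies with $n$ --- that is a different (and not obviously available) input, and it also does not explain how a hypothetical low-degree $Q$ in $\ol{X}'_n$ would contradict the hypotheses on $U$ without already knowing which monodromy subgroups can occur. The finiteness of the list $E_1,\dots,E_r$ in the base is exactly what lets one choose $n$ once and for all.

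On the degree estimate: the inequality $\deg(U, L|_U)\geq 1$ is false when $\dim U \geq 2$. By asymptotic Riemann--Roch, $\deg(U, L|_U) = (L|_U)^{\dim U}/(\dim U)!$, so all one gets for free is $\deg(U, L|_U) \geq 1/(\dim X)!$. The paper therefore arranges the mod $p^n$ images to have cardinality at least $(\dim X)!\cdot D$ (not merely $D$), so that after multiplying by $\deg(U,L|_U)$ one still clears $D$; and it splits into the cases $\deg(U,L|_U) < D$ and $\geq D$. You also implicitly replace the paper's use of $\pi_1^{\et}(\nu^{-1}(U^\circ))$ (preimage of $U^\circ$ in the normalization of $U$, required for \cite[Lemma~2.10]{BM22}) by $\pi_1^{\topo}(U^{\circ,s})$; these are only commensurable, and the precise identification $\deg(\tau|_Q) = |{\rm image\ of}\ \pi_1^{\et}(\nu^{-1}(U^\circ))\ {\rm in}\ G|$ is what \cite[Lemma~2.10]{BM22} supplies.
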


\begin{proof}
By \cite[Lemma 2.9]{BM22}, up to conjugation, there are only finitely many subgroups of 
$\pi^{\et}_1(X_{\CC})$ obtained as the image of 
$\pi_1^{\et}(f^{-1}(X_{\CC}))\ra \pi_1^{\et}(X_{\CC})$ 
with $f: T\ra \ol{X}_{\CC}$ a normal cycle such that 
\begin{itemize}
\item $\deg (T, f^*L_{\CC})\leq  D$, 
\item $f(T)$ is not contained in $Z_{\CC}$,
\item $f(T)^{\circ}:=f(T)\cap X_{\CC}$ is not contained in a single fiber of $\Phi$. 
\end{itemize} 
Let $E_1,\dots, E_r$ be such subgroups.
For any $i=1,\dots, r$, let $f_i:T_i\ra \ol{X}_{\CC}$ be normal cycles that induce these $E_i$ and have the listed properties above.
Let $F_i$ be the \'{e}tale fundamental group of the smooth locus $f_i(T_i)^{\circ,s}$ of $f_i(T_i)^{\circ}$.
Let $M_{i,n}$ be the image of $F_i$ under the \'{e}tale monodromy representation 
$\pi_1^{\et}(X_{\CC}, x)\ra \Aut \mc{L}_{n,\et,\CC, x}$. 
Let $F_i^{\topo}$ be the topological fundamental group of $f_i(T_i)^{\circ,s}$.
For any $i,n$, let $M_i^{\topo}$ and $M_{i,n}^{\topo}$ be the image of $F_i^{\topo}$ under the monodromy representation $\pi_1^{\topo}(X_{\CC}^{an},x)\ra \Aut \mc{L}_{an,x}$ and the mod $p^n$ representation respectively.
By Lemma \ref{unbound monodromy}, $M^{\topo}_i$ are infinite for all $i$.
Hence, the cardinalities of $M^{\topo}_{1,n},\dots, M^{\topo}_{r,n}$ can be made arbitrarily large uniformly (here we are using the finiteness) when $n\ra\infty$.
It follows from the commutative diagram in the beginning of this section that the same is true for the cardinalities of $M_{1,n},\dots, M_{r,n}$.

By constructibility, $X$ is open in $\ol{X}$, so $f_i(T_i)^{\circ,s}$ is open in $f_i(T_i)$.
Since $T_i$ is irreducible, $f_i(T_i)^{\circ,s}$ is irreducible.
Since $f_i^{-1}(f_i(T_i)^{\circ,s})$ is open in $T_i$, it is irreducible and normal.
Then by \cite[Lemma 11]{Kol03}, the image under $\pi^{et}_1(f_i^{-1}(f_i(T_i)^{\circ,s}))\ra F_i$ has finite index in $F_i$. 

Therefore, by fixing a large $n$, the cardinalities of the images of $E_1,\dots, E_r$ under the mod $p^n$ \'{e}tale monodromy representations can be made arbitrarily large uniformly, say $\geq (\dim X)! \cdot D$.

The \'{e}tale local system $\mc{L}_{n,\et}$ induces a finite \'{e}tale Galois cover of $X$ with deck group denoted by $G$. By enlarging $S$ if necessary, this cover extends to a finite \'{e}tale cover of the smooth integral model $\mc{X}$. Let $\tau:\ol{X}'\ra \ol{X}$ be normalization of $\ol{X}$ in this cover. The $G$-action on the cover extends uniquely to a $G$-action on $\tau$.

Let $\deg (\tau|_Q)$ be the degree of the finite map $\tau|_Q:Q\ra U$.
Suppose $\deg (U, L|_U)<D$. 
Let $\nu: U' \ra U$ be the normalization.
By the projection formula, 
$$\deg (U', \nu^*L|_{U'})=\deg (U, L|_U) <D.$$
The image of $\pi_1^{\et}(\nu^{-1}(U^{\circ}))\ra \pi_1^{\et}(U^{\circ})$ is conjugated to some $E_i$. 
By \cite[Lemma 2.10]{BM22}, $\deg (\tau|_Q)$ is equal to the cardinality of the image of the homomorphism 
$\pi_1^{\et}(\nu^{-1}(U^{\circ}))\ra G$. This cardinality is equal to the cardinality of the image of $\pi_1^{\et}(E_i)\ra G$, 
while this cardinality is $\geq (\dim X)!\cdot D$.
By asymptotic Riemann-Roch,
$$\deg (U, L|_U) :=\lim_{k\ra\infty} \frac{\dim_K\Gamma(U, L|_U^{\otimes k})}{k^{\dim U}}=\frac{(L|_U)^{\dim U}}{(\dim U)!}.$$
The intersection number $(L|_U)^{\dim U}$ is a positive integer. 
By the projection formula, 
$$\deg  (Q, \tau^*L|_Q)=\deg (\tau|_Q) \deg (U, L|_U).$$
Therefore, 
$$\deg  (Q, \tau^*L|_Q)\geq (\dim X)!\cdot D\cdot \frac{(L|_U)^{\dim U}}{(\dim U)!}\geq D.$$
In the case where $\deg (U,L|_U)\geq D$, we also have $\deg  (Q, \tau^*L|_Q)\geq D$.
\end{proof}

The remaining proof of Theorem \ref{Sparsity, abstract VMHS} is the same as the proof of \cite[Lemma 4.2]{ELV23} and Section 4.2 in \emph{op. cit.}. For completeness, we will give a sketch of it.

Let $\ell, d\geq 1$.
Let $V$ be a geometrically irreducible closed subvariety of $\ol{X}$ over $K$ of dimension $\ell$ and degree $d$ such that $V_{\CC}$ is not contained in $Z_{\CC}$ and $(V_{\CC}\cap X_{\CC})^{an}$ is not contained in a fiber of $\Phi$.

Choose $D$ such that $(\ell+1)/D^{1/\ell}<\varepsilon$. Taking this $D$ in Lemma \ref{Construction of cover}, we obtain a finite group $G$ and a finite morphism $\tau:\ol{X}'\ra \ol{X}$ satisfying the two properties therein.  
Let $X':=\tau^{-1}(X)$.

Firstly, there are finitely many covers $X'_j\ra X$ such that every $x\in X(\mc{O}_{K,S})$ lifts to a rational point in one of such covers:  A family of covers that satisfies this lifting property can be obtained by twisting the cover $X'\ra X$ and using \cite[Theorem 8.4.1]{P17}. Finiteness of such twists is due to Hermite-Minkowski theorem, see lines 9-21 of the proof of Lemma 4.2 of \cite{ELV23} for details. Let $\tau_j: \ol{X}'_j\ra \ol{X}$ be the normalization of $\ol{X}$ in the cover $X'_j\ra X$.

For a large enough integer $e$, the pullback $(\tau_j^* L)^{\otimes e}$ is very ample for all $j$. Use these line bundles to get projective embeddings $\ol{X}'_j\hookrightarrow \mb{P}^{M_j}$. There exists $c_{d,\varepsilon}>0$ such that for any integral point of $X\cap V$ with height $\leq B$, it is of the form $\tau_j(P)$ for some $P\in X'_j(K)\cap \tau_j^{-1}(V)(K)$ of height $\leq c_{d,\varepsilon}B^e$, see lines 22-34 of the proof of Lemma 4.2 of \cite{ELV23} for details.

Let $V^s$ be the smooth locus of $V$.
Let $V^{s,\circ}:= V^s\cap X$.
Since $\tau_j^{-1}(V^{s,\circ})$ is a finite \'{e}tale cover of the geometrically irreducible smooth $K$-variety $V^{s,\circ}$, its geometric components are pairwise distinct by \cite[Exp. I., Cor. 10.8]{GR71} and permuted by $\Gal(\ol{K}/K)$. 
The geometric components of $\tau_j^{-1}(V^{s,\circ})$ having a $K$-rational point are thus defined over $K$, and the number of such components is bounded by the cardinality of $G$. Let $Q^{\circ}$ be one of such components. The $K$-Zariski closure $Q$ of $Q^{\circ}$ is geometrically irreducible. The map $\tau_j:X'_j\ra X$ induces a map $\tau_j: Q\ra V$. Since $\tau_j$ is \'{e}tale over $V^{s,\circ}$, the image $\tau_j(Q)$ contains an open subset, so $\tau_j: Q\ra V$ is dominant. 
By the second property of Lemma \ref{Construction of cover} and the fact that $\tau_j$ is the twist of $\tau$, 
$\deg (Q, \tau^*L|_Q)\geq D$.
Then as in the last three paragraphs of Lemma 4.2 of \cite{ELV23} (with the only difference that $\varepsilon$ is rescaled to $\varepsilon/2e$ instead of $\varepsilon/2$ at the very end because we were taking a slightly different approach to bound the degree of $Q$; also note that $e$ is independent of $B$ and $V$, and can be chosen depending only on $\varepsilon$ and $\ell$),  using  Broberg's theorem \cite{Bro04} (which builds on fundamental ideas of Bombieri-Pila \cite{BP89} and Heath-Brown \cite{Hea02}), we can obtain the following lemma:

\begin{lemma}\label{covering subvariety count, codimension one}
Let $V$ be a geometrically irreducible closed subvariety of $\ol{X}$ over $K$ of dimension $\ell$ and degree $d$ such that $V_{\CC}$ is not contained in $Z_{\CC}$ and $(V_{\CC}\cap X_{\CC})^{an}$ is not contained in a fiber of $\Phi$. Then all integral points of $X\cap V$ of height $\leq B$ can be covered by $O_{d,\varepsilon, \ell}(B^{\varepsilon})$ irreducible subvariety over $K$ of dimension $\leq \ell-1$ and degree $O_{d,\varepsilon}(1)$.
\end{lemma}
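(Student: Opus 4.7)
The plan is to combine the geometric input of Lemma \ref{Construction of cover} with Broberg's quantitative covering theorem, applied inside each of the twists $\ol{X}'_j$.

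First I would fix $D$ large enough that $(\ell+1)/D^{1/\ell} < \varepsilon/(2e)$ and invoke Lemma \ref{Construction of cover} for $V$; the hypotheses on $V$ (that $V_{\CC}\not\subset Z_{\CC}$ and $(V_{\CC}\cap X_{\CC})^{an}$ is not contained in a single fiber of $\Phi$) are exactly those needed. The preceding paragraphs then furnish the finite family of twists $\tau_j:\ol{X}'_j\ra \ol{X}$, the projective embeddings $\ol{X}'_j\hookrightarrow \mb{P}^{M_j}$ coming from $(\tau_j^*L)^{\otimes e}$, and a lift of every $S$-integral point $x\in X\cap V$ of height $\leq B$ to a $K$-rational point $P\in \tau_j^{-1}(V)\cap X'_j$ of height $\leq c_{d,\varepsilon}B^e$.

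For each $j$, I would enumerate the (at most $|G|$) geometrically irreducible components $Q^\circ$ of $\tau_j^{-1}(V^{s,\circ})$ that carry a $K$-rational point; each $K$-Zariski closure $\bar Q$ is geometrically irreducible of dimension $\ell$ and maps dominantly onto $V$. Because $\tau_j$ is a twist of $\tau$, the conclusion of Lemma \ref{Construction of cover} transfers to give $\deg(\bar Q,\tau_j^*L|_{\bar Q})\ge D$, which under the embedding associated with $(\tau_j^*L)^{\otimes e}$ translates into a projective degree $\ge e^\ell D$ inside $\mb{P}^{M_j}$.

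Applying Broberg's theorem \cite{Bro04} to each $\bar Q$ with the parameter $\varepsilon/(2e)$, the $K$-rational points of $\bar Q$ of height $\leq c_{d,\varepsilon}B^e$ are contained in
\begin{equation*}
O_{d,\varepsilon,\ell}\bigl((B^e)^{(\ell+1)/(eD^{1/\ell})+\varepsilon/(2e)}\bigr)=O_{d,\varepsilon,\ell}(B^\varepsilon)
\end{equation*}
hypersurfaces of $\mb{P}^{M_j}$ of degree $O_{d,\varepsilon}(1)$, none of which contains $\bar Q$. I would intersect each such hypersurface with $\bar Q$, decompose into $K$-irreducible components (of dimension $\le \ell-1$ and degree $O_{d,\varepsilon}(1)$), and push them forward along the finite map $\tau_j$ to produce $K$-irreducible subvarieties of $V$ with the required dimension and degree bounds. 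Summing over the bounded number of twists and components yields the claimed $O_{d,\varepsilon,\ell}(B^\varepsilon)$ cover.

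The main bookkeeping obstacle I anticipate is the conversion between the polarized degree $\deg(\bar Q,\tau_j^*L|_{\bar Q})\ge D$ furnished by Lemma \ref{Construction of cover} and the projective degree in $\mb{P}^{M_j}$ demanded by Broberg's theorem; the conversion factor $e^\ell$ is precisely what forces the rescaling of $\varepsilon$ to $\varepsilon/(2e)$. Otherwise the computation is essentially that of \cite[\S 4.2]{ELV21}, transplanted into the mixed Hodge setting by using the cover produced by Lemma \ref{Construction of cover} in place of the period-theoretic cover used there.
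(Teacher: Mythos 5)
Your proposal follows the same route as the paper's sketch: invoke Lemma~\ref{Construction of cover} with the given hypotheses on $V$, pass to the finite family of twists $\tau_j:\ol{X}'_j\ra\ol{X}$, lift $S$-integral points of $X\cap V$ to $K$-rational points of height $\leq c_{d,\varepsilon}B^e$ in $\tau_j^{-1}(V)$, identify the $\leq|G|$ geometrically irreducible components $Q$ of $\tau_j^{-1}(V^{s,\circ})$ with a $K$-point and lower-bound their degrees, then apply Broberg and intersect/push forward. The explicit conversion between $\deg(Q,\tau_j^*L|_Q)\geq D$ and the projective degree $\geq e^\ell D$ (in fact $\geq e^\ell\,\ell!\,D$, using the $(\dim U)!$ factor already absorbed into the construction lemma) is exactly what the paper alludes to when it says the Broberg parameter gets rescaled to $\varepsilon/(2e)$ rather than $\varepsilon/2$.

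One caution about the bookkeeping: you choose $D$ so that $(\ell+1)/D^{1/\ell}<\varepsilon/(2e)$ \emph{before} invoking Lemma~\ref{Construction of cover}, but $e$ is determined only afterwards, since $e$ must make $(\tau_j^*L)^{\otimes e}$ very ample and the $\tau_j$ depend on $D$. This is a genuine (if mild) circularity. The paper sidesteps it by picking $D$ depending only on $\varepsilon$ and $\ell$; then, because the projective degree of $Q$ in $\mb{P}^{M_j}$ scales as $e^{\ell}$, the factor of $e$ coming from the degree conversion cancels against the factor of $e$ coming from the height inflation $B\mapsto B^e$ in the term $(\ell+1)e/\delta^{1/\ell}$ with $\delta\gg e^{\ell}D$, so that term is controlled by $D$ alone; only the residual Broberg parameter needs rescaling by $1/e$. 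If you replace your choice of $D$ by one depending only on $\varepsilon$ and $\ell$ and note this cancellation, your computation goes through unchanged. The pushforward step at the end is fine: by the projection formula, $\deg(\tau_j(Z),L)\leq\deg(Z,\tau_j^*L)$, which is bounded by Bezout, so the images retain dimension $\leq\ell-1$ and degree $O_{d,\varepsilon}(1)$.
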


For the irreducible subvarieties obtained in Lemma \ref{covering subvariety count, codimension one} that are not geometrically irreducible, their rational points can be covered by $O_{d,\varepsilon,\ell}(1)$ subvarieties of smaller dimensions and of degree $O_{d,\varepsilon, \ell}(1)$ using \cite[Lemma 2.4(c)]{ELV23}. For the geometrically irreducible ones that cover the integral points but does not contained in $Z_{\CC}$ and not contained in a fiber of $\Phi$, we can apply Lemma \ref{covering subvariety count, codimension one} again on them. By starting from $X$ instead and repeating this procedure, we can deduce Theorem \ref{Sparsity, abstract VMHS}, as in  \cite[Section 4.2]{ELV23}.

\subsection{Proof of Theorem \ref{Sparsity, surjective quasi-projective}}
By Saito's theory \cite[Equation 2.18.1 and Theorem 3.27]{Sai90},
$R^i\pi_! \QQ$ underlies a mixed Hodge module, which is an admissible graded-polarized variation of mixed $\QQ$-Hodge structures on $(X_1)_{\CC}$, where $X_1$ is the non-empty Zariski open subset of $X$ on which the perverse sheaf $R^i\pi_! \QQ$ is a local system.
Let $p$ be a prime for which $\mc{L}_{an}:=R^i\pi_! \ZZ|_{X_1}$ is $p$-torsion-free. 
By \cite[Theorem 12.10 and Theorem 12.15]{FK}, there exists a non-empty open subset $X_2$ of $X$ such that $R^i\pi_!\ZZ_p|_{X_2}$ is a lisse $p$-adic sheaf.
Take $X^*$ to be $X_1\cap X_2$.
We have an isomorphism $(R^i\pi_! \ZZ_p|_{X^*})_{\CC}^{an}\simeq \mc{L}_{an}|_{X^*}\otimes \ZZ_p$ of $\ZZ_p$-local systems.
Theorem \ref{Sparsity, surjective quasi-projective} then follows from Theorem \ref{Sparsity, abstract VMHS}.

\begin{bibdiv}
\begin{biblist}

\bib{Bat93}{article} {author={V. V. Batyrev}, title={Variations of the mixed Hodge structure of affine hypersurfaces in algebraic tori},  journal={Duke Math. J.}, volume={69(2)}, date={1993},  pages={349--409}}

\bib{BP89}{article}{author={E. Bombieri}, author={J. Pila}, title={The number of integral points on arcs and ovals}, journal={Duke Math. J.}, volume={59(2)}, date={1989},  pages={337--357}}

\bib{Bro04}{article}{author={N. Broberg}, title={A note on a paper by R. Heath-Brown: ``The density of rational points on curves and surfaces''}, journal={J. reine angew. Math.}, volume={571}, date={2004},  pages={159--178}}

\bib{BE}{book} {author={P. Brosnan}, author={F. El Zein}, title={Variation Of Mixed Hodge Structures},  publisher={in Hodge theory (edited by E. Cattani, F. El Zein, P. A. Griffiths, D. T. L\^{e}), Princeton University Press},  date={2014}}

\bib{BM22}{article} {author={Y. Brunebarbe}, author={M. Maculan},  title={Counting integral points of bounded height on varieties with large fundamental group}, journal={J. Reine Angew. Math.}, volume={807}, date={2024}, pages={31--53}}

\bib{BZ}{book} {author={J.-L. Brylinski}, author={S. Zucker}, title={An overview of recent advances in Hodge theory},  publisher={in Complex Manifolds, Springer}, date={1998}}

\bib{ELV23}{article} {author={J. S. Ellenberg}, author={B. Lawrence}, author={A. Venkatesh}, title={Sparsity of Integral Points on Moduli Spaces of Varieties},  journal={Int. Math. Res. Not. IMRN}, volume={17}, date={2023},  pages={15073--15101}}

\bib{Fal83}{article} {author={G. Faltings}, title={Endlichkeitss\"{a}tze f\"{u}r abelsche Variet\"{a}ten \"{u}ber Zahlk\"{o}rpern},  journal={Invent. Math.}, volume={73(3)}, date={1983},  pages={349--366}}

\bib{FK}{book} {author={E. Freitag}, author={R. Kiehl}, title={Etale cohomology and the Weil conjecture},  publisher ={Springer-Verlag}, date={1988}}

\bib{FF}{article} {author={O. Fujino}, author={T. Fujisawa}, title={Variations of mixed Hodge structure and semi-positivity theorems},  journal={Publ. RIMS Kyoto Univ.}, volume={50}, date={2014},  pages={589--661}}

\bib{GKZ94}{book} {author={I. M. Gelfand}, author={M.M. Kapranov}, author={A.V. Zelevinsky}, title={Discriminants, Resultants, and Multidimensional Determinants}, publisher={Birkh\"{a}user}, date={1994}}

\bib{GR71}{book} {author={A. Grothendieck}, author={M. Raynaud},  title={Rev\^{e}tements \'{E}tales et Groupe Fondamental (SGAI),  S\'{e}minaire de g\'{e}om\'{e}trie alg\'{e}brique du Bois Marie,1960/61}, publisher={in Lecture notes in mathematics 224, Springer-Verlag}, date={1971}}

\bib{Hea02}{article}{author={D. R. Heath-Brown}, title={The Density of Rational Points on Curves and Surfaces}, journal={Ann. of Math.}, volume={155(2)}, date={2002},  pages={553--598}}

\bib{Hir64}{article} {author={H. Hironaka},  title={Resolution of Singularities of an Algebraic Variety Over a Field of Characteristic Zero: I}, journal={Ann. of Math.}, volume={79(1)}, date={1964},  pages={109--203}}

\bib{Kol03}{book} {author={J. Koll\'{a}r},  title={Rationally Connected Varieties and Fundamental Groups}, publisher={in Higher Dimensional Varieties and Rational Points, p. 69--92, Royal Society Mathematical Studies 12, Springer}, date={2003}}

\bib{LS20}{article} {author={B. Lawrence}, author={W. Sawin}, title={The Shafarevich conjecture for hypersurfaces in abelian varieties}, pages={arXiv:2004.09046v2}}

\bib{LV20}{article} {author={B. Lawrence}, author={A. Venkatesh}, title={Diophantine problems and $p$-adic period mappings},  journal={Invent. Math.}, volume={221(3)}, date={2020},   pages={893--999}}

\bib{PS}{book} {author={C. A. M. Peters},author={J. H. M. Steenbrink},  title={Mixed Hodge Structures}, publisher={Springer}, date={2008}}

\bib{P17}{book} {author={B. Poonen},  title={Rational Points on Varieties}, publisher={Graduate Studies in Mathematics 186, American Mathematical Society}, date={2017}}

\bib{Sai90}{article} {author={M. Saito}, title={Mixed Hodge Modules},  journal={Publ. RIMS, Kyoto Univ.}, volume={26}, date={1990},   pages={221--333}}

\end{biblist}
\end{bibdiv}

\end{document}